\renewcommand{\cref}[1]{\Cref{#1}} 
\newcommand{\bK}{K}
\newcommand{\bQ}{Q}
\newcommand{\bN}{N}
\NewDocumentCommand{\drawchessboard}{ m m m}{%
  \begin{tikzpicture}[scale=0.6]
    \tikzset{
      chesspiece/.style={},  
      coordlabel/.style={}  
    }

    \foreach \x in {1,...,#2} {
      \foreach \y in {1,...,#2} {
        \pgfmathtruncatemacro{\iswhite}{mod(\x+\y,2)}
        \ifnum\iswhite=0
          \fill[lightgray] (\x-1,\y-1) rectangle (\x,\y); 
        \else
          \fill[white] (\x-1,\y-1) rectangle (\x,\y); 
        \fi
        \draw (\x-1,\y-1) rectangle (\x,\y); 
      }
    }

    \ifthenelse{\equal{#3}{true}}{
      \foreach \x in {1,...,#2} {
        \node[coordlabel, anchor=north] at (\x-0.5, -0.2) {\x};
      }
      \foreach \y in {1,...,#2} {
        \node[coordlabel, anchor=east] at (-0.2, \y-0.5) {\y};
      }
    }{}
    \begin{scope}[shift={(-0.5,-0.5)}]
      #1
    \end{scope}    
  \end{tikzpicture}}
\theoremstyle{definition}
\theoremstyle{plain}
	\newtheorem{theorem}{Theorem}[section]
\newtheorem{lemma}[theorem]{Lemma}
\theoremstyle{remark}
	\newtheorem*{remark}{Remark}
\title[Cops and robbers on chess graphs]{Cops and robbers on chess graphs}
\author[]{Sally Ambrose$^{*}$}
\address{Northeastern University, Boston, Massachusetts, USA}
\author[]{Evan Angelone$^{\dagger}$}
\email{\url{griggs.e@northeastern.edu}}
\author[]{Jacob Chen$^{*}$}
\author[]{Daniel Ma$^{*}$}
\author[]{Arturo Ortiz San Miguel$^{\dagger}$}
\email{\url{ortizsanmiguel.a@northeastern.edu}}
\author[]{Wraven Watanabe$^{*}$}
\author[]{Stephen Whitcomb$^{*}$}
\author[]{Shanghao Wu$^{*}$}
\thanks{This research was part of the Northeastern Summer Math Research
Program (NSMRP), which is an REU. The authors were partially supported by the Northeastern University College of Science, the Northeastern University Department of Mathematics, and NSF grant DMS-1645877.}
\thanks{$^{*}$Undergraduate mentee.}
\thanks{$^{\dagger}$PhD student mentor.}
\subjclass[2020]{49N75 (Primary); 05C57, 91A24 (Secondary)}
\keywords{Cops and robbers, pursuit-evasion, chess graphs}
\begin{document}
	\begin{abstract}
	Cops and robbers is a pursuit-evasion game played on graphs. We completely classify the cop numbers for $n \times n$ knight graphs and queen graphs. This completes the classification of the cop numbers for all $n \times n$ classical chess graphs. As a corollary, we resolve an open problem about the monotonicity of $c$($\mathcal{Q}_n$). Moreover, we introduce \emph{royal graphs}, a generalization of chess graphs for arbitrary piece movements, which models real-life movement constraints. We give results on the cop numbers for these families.
\end{abstract}
	\maketitle

	\section{Introduction}\label{sec:introduction}
	\subsection{Rules and notation}\label{sub:rules_and_notation}
	Cops and robbers is a perfect knowledge pursuit-evasion game inspired by the popular children's game of the same name. The game is played on a graph ~$G=(V,E)$. In cops and robbers, one player (the robber) attempts to evade capture by the other player (the cops) by taking turns moving along the edges of a graph.
	\medskip

	The rules of the game are as follows:
    \begin{enumerate}
	    \item A fixed number (say, $k$) of cops are each placed on a starting vertex. The robber then chooses a starting vertex.
	    \item During each turn, players can either move to adjacent vertices or choose not to move. Players alternate turns with the cops moving first.
	    \item If at least one of the~$k$ cops lands on the same vertex as the robber, then the cops win. Otherwise, if the robber can evade capture indefinitely, the robber wins.
    \end{enumerate} 
    If~$k$ cops can always win after a finite number of turns, regardless
    of the robber's strategy, we say~$G$ is \emph{$k$-cop win}. Clearly, if we let~$k = |V|$, the cops win. As such, we are interested in the smallest number $k$ of cops needed to win on a graph. Such $k$ is known as the \emph{cop number} of~$G$ and denoted by~$c(G)$. If a graph is 1-cop win, we call it \emph{cop-win}.
    \medskip

    The study of cops and robbers (including its many variants) and the corresponding cop number has been extensively studied across a variety of graphs. One classical example where graphs arise is the study of chess pieces and how they move. A \emph{chess graph} is a graph-theoretic representation of the movement of a chess piece on a board. The vertices of a chess graph represent the squares of a chessboard, with adjacencies determined by whether the corresponding piece can move between them. We denote the $n\times n$ knight and queen graphs as $\mathcal{N}_n$ and $\mathcal{Q}_n$, respectively. Here, we give cop numbers for knight and queen graphs. This completes the classification of the cop numbers for $n \times n$ classical chess graphs. 

     \begin{figure}[h!]
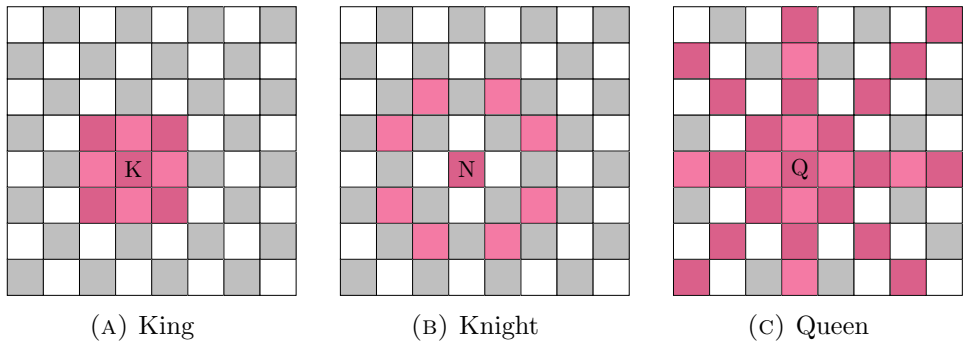

        \centering
        \begin{subfigure}[b]{0.3\textwidth}
        \centering
        \scalebox{0.8}{
        \drawchessboard{
          \foreach \dx in {-1,0,1} {
            \foreach \dy in {-1,0,1} {
              \pgfmathtruncatemacro{\newx}{4 + \dx}
              \pgfmathtruncatemacro{\newy}{4 + \dy}
                \ifthenelse{\newx > 0 \AND \newx < 9 \AND \newy > 0 \AND \newy < 9}{
              \node[fill=WildStrawberry, opacity=0.6, minimum size=.57cm, inner sep=0] at (\newx, \newy) {};
                }{}
            }
          }
          \node[chesspiece] at (4,4) {\bK};
        }{8}{false}
        }
        \caption{King}
        \end{subfigure}
        \hfill
        \begin{subfigure}[b]{0.3\textwidth}
        \centering
        \scalebox{.8}{
        \drawchessboard{
            \foreach \dx/\dy in {2/1, 2/-1, -2/1, -2/-1, 1/2, 1/-2, -1/2, -1/-2, 0/0} {
            \pgfmathtruncatemacro{\newx}{4 + \dx}
            \pgfmathtruncatemacro{\newy}{4 + \dy}
            \ifthenelse{\newx > 0 \AND \newx < 9 \AND \newy > 0 \AND \newy < 9}{
              \node[fill=WildStrawberry, opacity=0.6, minimum size=.57cm, inner sep=0] at (\newx, \newy) {};
            }{}
            }
            \node[chesspiece] at (4,4) {\bN};
        }{8}{false}
        }
        \caption{Knight}
        \end{subfigure}
        \hfill
        \begin{subfigure}[b]{0.3\textwidth}
        \centering
        \scalebox{.8}{
        \drawchessboard{
          \foreach \i in {1,2,3,5,6,7,8} {
            \node[fill=WildStrawberry, opacity=0.6, minimum size=.57cm, inner sep=0] at (\i, 4) {};
            \node[fill=WildStrawberry, opacity=0.6, minimum size=.57cm, inner sep=0] at (4, \i) {};
          };
          \foreach \offset in {1,2,3} {
            \node[fill=WildStrawberry, opacity=0.6, minimum size=.57cm, inner sep=0] at ({4+\offset}, {4+\offset}) {};
            \node[fill=WildStrawberry, opacity=0.6, minimum size=.57cm, inner sep=0] at ({4-\offset}, {4+\offset}) {};
            \node[fill=WildStrawberry, opacity=0.6, minimum size=.57cm, inner sep=0] at ({4+\offset}, {4-\offset}) {};
            \node[fill=WildStrawberry, opacity=0.6, minimum size=.57cm, inner sep=0] at ({4-\offset}, {4-\offset}) {};
          };
          \node[fill=WildStrawberry, opacity=0.6, minimum size=.57cm, inner sep=0] at (8,8) {};
          \node[fill=WildStrawberry, opacity=0.6, minimum size=.57cm, inner sep=0] at (4,4) {};
          \node[chesspiece] at (4,4) {\bQ};
        }{8}{false}
        }
        \caption{Queen}
        \end{subfigure}
        
        \caption{Chess graphs highlighting neighborhoods $N(v)$ of the vertex $v$ at (4,4).}
        \label{fig:chessgraphs}
    \end{figure}
    
    As such, we additionally consider further generalizations of chess graphs, known as \emph{royal graphs} and \emph{animal graphs}. These generalizations respectively extend the global and local movement of pieces to arbitrary directions. In applications, these graphs model movement constraints typical to machines or animals in their surroundings. Studying cops and robbers on royal and animal graphs provides a framework for modeling pursuit-evasion dynamics on structured environments, with potential applications in robotics and network navigation, where movement constraints mimic directional or geometric limitations of objects and their environments.
    \smallskip
	\subsection{Discussion and results}\label{sub:discussion_and_results}
	Cop numbers are a well-studied property for certain classes of graphs. For example, cop-win graphs are particularly well understood. A graph $G$ is cop-win if and only if $G$ is \emph{dismantlable} \cite{inbook}.

    Outside of special families of graphs, cop numbers are generally difficult (in fact, EXPTIME-complete \cite{kinnersley2015cops}) to compute for arbitrary graphs. As a result, there exist several general upper bounds on the cop number. One of the long-standing conjectures is Meyniel's conjecture, which states that for all graphs $G$ on $n$ vertices, $c(G) = O(\sqrt{n})$. In fact, it is still open to show that
            $c(G) = O(n^{1-\varepsilon}),$ for arbitrarily small $\varepsilon$. Furthermore, \cite{hosseini} showed that if Meyniel's conjecture holds for subcubic graphs, then the weak Meyniel's conjecture holds for $\varepsilon < 1/4$. The best known general bound, shown by Scott and Sudakov in \cite{scott-sudakov} is,
    \begin{align*}
        c(G) = O \left(\frac{n}{2^{1-o(1)\sqrt{\log (n)}}} \right).
    \end{align*}
    \smallskip
    
    Along with these bounds based on the order of $G$, there exist similar bounds pertaining to the topology of $G$. Aigner and Fromme \cite{aigner-fromme} showed, rather surprisingly, that if a graph $G$ is planar, then $c(G) \leq 3$. Here, we present a natural extension of Aigner-Fromme's theorem, which to our knowledge has not appeared in the literature, to (potentially non-planar) graphs that can be covered by a planar graph. We say a graph $H$ \emph{covers} $G$ if there exists some surjection $\rho\colon V_H \rightarrow V_G$ that preserves adjacency. Clearly, if $H$ covers $G$, then $c(H) \geq C(G)$.
    \begin{remark}\label{cor:Archdeacon}
        If $G$ can be embedded in the projective plane, then $c(G) \leq 3$, since by Negami's theorem \cite{hlineny}, such graphs have planar covers.
    \end{remark}
    
    We note that this condition is equivalent to $G$ not having one of $35$ forbidden minors \cite{archdeacon-1}. Moreover, \cite{schroder} has shown that for genus-1 graphs, the cop number is at most four, while for genus-2 graphs the cop number is at most 5, which is the basis for \emph{Schr\"oder's conjecture}, which claims that
        the cop number of a genus-$g$ graph is at most $g + 3$.
   In the same paper, Schr\"oder showed $c(G) \leq \lfloor 3g/2 \rfloor + 3$, which has been improved by \cite{bowler}, where they showed that for a graph $G$ of genus $g$, we have $c(G) \leq (4g+10)/3$.
    
    We now shift our attention to chess graphs. Firstly, it can be shown trivially that rook graphs $\mathcal{R}_n$ are $2$-cop win for $n \geq 2$. Similarly, connected components of a bishop graph $\mathcal{B}_n$ are $2$-cop win for $n\geq 4$. King graphs $\mathcal{K}_n$ are cop-win as they are dismantlable. Knight graphs $\mathcal{N}_n$ are more interesting, and to our knowledge, cops and robbers on knight graphs has not been studied before in the literature. In \cref{sec:knight_graphs}, we prove the classification of the cop numbers for $\mathcal{N}_n$.

    \begin{theorem}\label{thm:cop knights}
         \begin{align*}
             c(\mathcal{N}_n) = \begin{cases}
                 1, \text{ for } 1\leq n \leq 2,\\
                 2, \text{ for } 4\leq n\leq 6, \\
                 3, \text{ otherwise}.
             \end{cases}
         \end{align*}
    \end{theorem}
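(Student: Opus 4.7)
The plan is to partition by board size into three regimes. For $n \leq 3$, the cop numbers follow from direct computation on the small graphs: $\mathcal{N}_1$ is a single vertex, $\mathcal{N}_2$ is edgeless, and $\mathcal{N}_3 \cong C_8 \sqcup K_1$. For $4 \leq n \leq 6$, I would prove $c(\mathcal{N}_n) \geq 2$ by verifying that no vertex of $\mathcal{N}_n$ is dominated, so $\mathcal{N}_n$ is not dismantlable and hence not cop-win. For $c(\mathcal{N}_n) \leq 2$, I would exhibit an explicit two-cop trapping strategy on each finite board; since only finitely many positions are involved, an ad-hoc case analysis or light computer verification suffices.

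For $n \geq 7$, the lower bound $c(\mathcal{N}_n) \geq 3$ comes from an explicit robber strategy against two cops. The key observation is that a knight's closed neighborhood has at most nine vertices, so after the cops' move the threat set $N[c_1'] \cup N[c_2']$ has size at most $18$. I would argue that from any robber position $r$ with $r \notin N[c_1] \cup N[c_2]$, and for every cop response $(c_1',c_2')$, there exists an escape vertex $r' \in N[r]$ with $r' \notin N[c_1'] \cup N[c_2']$. This reduces to a local neighborhood count, split by how close $r$ is to the boundary; the hypothesis $n \geq 7$ is used to guarantee that the robber can stay deep enough in the board to avoid the corner regions where a knight's neighborhood is small enough to be fully covered by the two cops.

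The main obstacle is the upper bound $c(\mathcal{N}_n) \leq 3$ for $n \geq 7$, where I need an explicit three-cop strategy. My plan is a shrinking-region argument: one cop plays a shadow role, keeping bounded distance from the robber so that the robber's freedom is restricted, while the remaining two cops coordinate to occupy squares severing knight-move connectivity between the robber's current reachable set and a growing captured region. Progress is tracked by a monovariant, such as the size of the robber's accessible component, which must strictly decrease each round until the robber is confined to a small corner where the shadow cop completes the capture. The delicate step is showing the two coordinating cops can always sever enough connectivity despite the leaping nature of knight moves; I anticipate a careful case analysis adapted to the boundary, and I expect that handling knight jumps which skip over a row controlled by a cop will be the trickiest detail of the proof.
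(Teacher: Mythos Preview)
Your outline for the small boards is essentially the same as the paper's. For $4\le n\le 6$ the paper gets the lower bound by noting that the robber can sit on an induced $4$-cycle rather than by checking non-dismantlability; either works.

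For $n\ge 7$ your lower-bound plan has a real gap. The raw count ``$|N[c_1']\cup N[c_2']|\le 18$ versus $|N[r]|\le 9$'' points the wrong way, and the salvage you gesture at (a boundary-stratified local count) still leaves the hard part unproved: you have not argued that two cops cannot herd the robber toward a corner or edge, where $|N[r]|$ drops to $3$ or $4$ and a covering attack becomes feasible. The paper sidesteps this entirely by exhibiting an explicit $16$-vertex induced subgraph, present once $n\ge 7$, that is $4$-regular. The robber commits to that subgraph for the whole game; then one checks that a threatening cop together with a second cop can block at most three of the robber's four subgraph-neighbours, so an escape always exists. Finding such a rigid ``safe region'' is the missing idea in your sketch.

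Your upper-bound strategy is genuinely different from the paper's and, as you suspect, the severing step is where it will break: a knight jump leaps over any row or column a single cop tries to hold, so two cops cannot cut the board into components in the way planar shortest-path arguments do. The paper instead keeps all three cops in a rigid diagonal formation on squares of one colour. The key observation is that this formation, moving as a unit, acquires \emph{extra} step directions $(\pm 1,0),(0,\pm 1)$ in addition to the usual knight moves, and its combined neighbourhood is a dense patch of opposite-coloured squares. The cops then chase by mimicking and closing distance until the robber is pinned against the boundary, after which a short endgame case analysis (occasionally breaking formation) finishes. If you want to push your shadow-plus-sever idea through, you will need a substitute for ``guarding a line'' that actually works for knights; the paper's answer is this moving patch rather than a static barrier.
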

    \medskip
    
    In \cref{sec:queen_and_royal_graphs}, we study cops and robbers on queen graphs $\mathcal{Q}_n$. The cop numbers $c(\mathcal{Q}_n)$ are known up to $n = 6$. It is also known that $c(\mathcal{Q}_n) \leq 4$ for all $n \geq 1$, as the cops simply guard all four of the robbers direction's of movement \cite{sullivan2017introduction}. In fact, Sullivan \textsl{et al.} showed in \cite{sullivan2017introduction} that $c(\mathcal{Q}_n) = 4$ for $n \geq 19$. We classify $c(\mathcal{Q}_n)$ for the remaining cases $7 \leq n \leq 18$.
    
    \begin{theorem}\label{thm: 3-cop win algo}
        $c(\mathcal{Q}_n) = 3$ for $7 \leq n \leq 18$.
    \end{theorem}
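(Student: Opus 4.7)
The equality $c(\mathcal{Q}_n)=3$ splits into the upper bound $c(\mathcal{Q}_n)\leq 3$ and the lower bound $c(\mathcal{Q}_n)\geq 3$ for each $n\in\{7,\ldots,18\}$. The upper bound requires an explicit three-cop strategy, while the lower bound requires a robber strategy that evades two cops. The plan is to handle these separately and uniformly in $n$ where possible.

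For the lower bound, I would run a line-counting argument. Through any square of the $n\times n$ board pass exactly four lines (its row, column, and two diagonals), so each cop's attack set covers only four lines out of the $6n-2$ lines in $\mathcal{Q}_n$. With only two cops, at most eight lines are under attack at any moment, and for $n\geq 7$ this leaves a large safe region. The robber strategy is to maintain the invariant that the occupied square has all four of its lines outside the cops' attacks; one then checks that from any such square the robber can always find an adjacent square preserving the invariant, regardless of the cops' next move. A small amount of boundary analysis near corners (where a square's lines interact awkwardly with the board boundary) will be needed to confirm no capture is forced.

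For the upper bound, I would give a three-cop \emph{compression} algorithm. Two cops placed on a common line (say a row) act as a barrier: as long as they remain close enough to cover the squares between them, they attack the entire row and both of their diagonals, confining the robber to one of two half-boards. The third cop hunts inside the robber's half. Advancing the two barrier cops one step at a time toward the robber squeezes the accessible region into an ever-narrower strip; on a strip of bounded width the queen graph behaves essentially like a rook-dominated strip graph, and a direct cop-win or two-cop-win analysis lets the third cop finish the capture. The bound $n\leq 18$ should emerge as exactly the largest board on which this compression phase can terminate before the robber exploits a diagonal gap around the barrier, matching the known $c(\mathcal{Q}_n)=4$ for $n\geq 19$.

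The main obstacle will be the upper bound, specifically verifying that the two barrier cops can advance without leaving a gap through which the robber could make a two-move escape (a diagonal move followed by a row move, say) around the barrier. This is precisely where the threshold $n=18$ is expected to appear: the barrier advance must synchronize with the third cop's pursuit to close the diagonal escape, and the synchronization succeeds only up to a bounded strip width. I anticipate handling the narrow residual configurations by a finite case analysis, possibly computer-assisted, for each $n$ in the range, and distilling a single uniform invariant that the three cops maintain throughout the compression.
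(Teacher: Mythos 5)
There are genuine gaps on both sides of your argument. For the upper bound, the central mechanism --- two cops on a common row acting as a barrier that ``confines the robber to one of two half-boards'' --- does not work on a queen graph: guarding a line only forbids the robber from \emph{landing} on it, not from crossing it. A queen above the barrier row can move in a single step to a square below it (by a vertical or diagonal move that skips over the guarded row), so the robber leaps the barrier outright; the ``two-move escape around the barrier'' you plan to rule out is not the failure mode. The paper's three-cop strategy is entirely different: the cops play a greedy minimax on the length $\Phi(s_t)$ of the robber's shortest diagonal (three cops can always guard three of the robber's four lines, so $\Phi$ can be made non-increasing, Lemma~\ref{monotonic}), a saddle-point argument (Lemma~\ref{saddle}) shows the greedy robber reply is worst-case, and termination for $7 \leq n \leq 18$ is established by direct computation. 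The threshold $18$ does not fall out of a clean structural invariant; it is a computational fact, consistent with $c(\mathcal{Q}_{19}) = 4$.

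For the lower bound, your line-counting invariant is both too coarse and asserted where it matters most. The relevant quantity is not how many of the $6n-2$ lines the two cops' attack lines comprise, but how many vertices of a single short robber line two cops can guard: by Lemma~\ref{lem:guarding}, a cop guards at most three vertices of a robber line (only two if it is simultaneously threatening on another line), so a capture position forces the robber's shorter diagonal to have length at most five, and $n - \lfloor (n-1)/2 \rfloor > 5$ only for $n \geq 10$. Your claim that the escape invariant can always be maintained ``for $n \geq 7$'' silently covers $n = 7, 8, 9$, which are exactly the cases where this kind of counting is not strong enough and the paper falls back on exhaustive search over all $3\binom{n^2}{3}$ cop configurations. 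Moreover, the invariant itself (all four of the robber's lines unattacked) is stronger than the actual safety condition and you give no argument that an adjacent square satisfying it always exists after the cops move. Without either the finite computation for $7 \leq n \leq 9$ or a genuinely finer argument, that part of the lower bound remains open in your write-up.
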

    
    This resolves the open problem of showing the monotonicity of $c(\mathcal{Q}_n)$ by determining the cop numbers of $\mathcal{Q}_n$ for all $n$ mentioned in \cite{sullivan2017introduction}. This is closely related to the open problem of showing the monotonicity of the domination number $\gamma(\mathcal{Q}_n)$, which has been shown to fail for rectangular queen graphs \cite{bozoki2019domination}. In particular, we find the smallest $4$-cop win queen graph to be $\mathcal{Q}_{19}$. Together, theorems \ref{thm:cop knights} and \ref{thm: 3-cop win algo} complete the classification of the cop number of all the $n \times n$ classical chess graphs.
    
    The next step is to look at generalizations of chess graphs. We call the first of these \emph{royal graphs}, which consist of a two-dimensional lattice of $n^2$ vertices, as a chessboard, as well as a set $D$ of directions in which we have incidences. Two vertices are connected if the direction between them is in $D$. For example, $\mathcal{Q}_n$ is a lattice of $n^2$ points, with the directions being $D=\{(1,0)$, $(0,1)$, $(1,1)$, $(1,-1)\}$. Furthermore, we call $\mathcal{Q}_n$ a \emph{4-royal graph}, since its set of directions, $D$, consists of four elements and we define \emph{$k$-royal graphs} similarly. Note that some royal graphs are disconnected. Since the cop number is additive on the connected components of $G$, we write $c(G)$ as the cop number of one of the connected components of $G$. At the end of \cref{sec:queen_and_royal_graphs}, we show that the cop number of $n\times n$ royal graphs with the same $D$, called a \emph{royal family}, is eventually constant. For $k$-royal graphs, the constant is exactly $k$.

    \begin{theorem}\label{thm: cop number bound for k-royal graphs}
        For any $k$-royal family $\{G_{n}\}_{n \in \mathbb{N}}$ of $k$-royal graphs, there exists $N \in \mathbb{N}$ such that $c(G_{n}) = k$ for all $n \geq N$.
    \end{theorem}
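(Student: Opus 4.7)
The plan is to establish the bounds $c(G_n) \leq k$ and $c(G_n) \geq k$ separately for $n$ beyond some threshold $N$. The lower bound is the substantive part; the upper bound extends the queen case.

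For the upper bound, I would generalize the $4$-cop strategy used for $\mathcal{Q}_n$. Assign one of the $k$ cops to each direction $d_i \in D$, with cop $i$ aiming to occupy a vertex on the line through the robber in direction $d_i$ (its \emph{shadow}). If all $k$ cops simultaneously sit on their shadows, the robber is captured: any move along $d_i$ is intercepted by cop $i$, and staying put is also lost. For the queen, each cop reaches its shadow in a single move because any two of the four queen directions intersect at every lattice point; for a general $k$-royal graph, this may require several moves, but it remains feasible when the board is large enough for the cops to maneuver.

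For the lower bound, I would construct a robber strategy against $k-1$ cops. Call a vertex $v$ \emph{threatened} if some cop lies on a line through $v$ in a direction from $D$, so that the cop could capture the robber at $v$ on its next move. The robber first picks an unthreatened starting vertex, which exists for $n$ large since the threatened set has at most $(k-1)kn$ vertices out of $n^2$. The robber then maintains the invariant that its current position is unthreatened. After each cop move (to positions $c_1, \ldots, c_{k-1}$), the robber seeks a safe vertex $R'$ reachable from its current position $R$ in one step. The reachable set---equal to $\{R\}$ together with the union of the $k$ lines through $R$---has size $\Theta(kn)$, while each of the at most $(k-1)k$ cop lines intersects each of the $k$ robber lines in at most one lattice point (line coincidence is ruled out by the invariant), giving at most $O(k^3)$ threatened reachable vertices. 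Thus for $n$ at least a constant multiple of $k^2$, a safe move always exists, so the robber evades capture indefinitely, and $c(G_n) \geq k$.

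The main obstacle is the upper bound for general direction sets $D$. The queen strategy works cleanly because the four queen directions are unimodular (each pair spans $\mathbb{Z}^2$), so a cop moving along one direction can always hit the robber's shadow in a single move. For general $k$-royal graphs, the determinants $\det(d_i, d_j)$ may have common factors, and a single cop move might miss the intended shadow line. The natural remedy is to introduce a potential function measuring each cop's deviation from its shadow and argue that the cops can make monotonic progress against any robber response; alternatively, one could identify a retract to a bounded-size subgraph where $k$ cops trivially suffice. Either approach requires care in handling the robber's countermoves, and this is where I expect the bulk of the work to lie.
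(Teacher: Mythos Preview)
Your lower-bound argument shares the counting core of the paper's ``octagon argument,'' but it is missing the geometric constraint that makes the counting valid. Maintaining only the invariant ``unthreatened'' allows the cops to herd the robber toward the boundary, where your claim that the reachable set has size $\Theta(kn)$ can fail: for a general direction set $D$, a vertex near the edge may have all $k$ of its lines short. The paper handles this by fixing, for each $j$, the strip of $d_j$-lines carrying more than $(k-1)(k-2)+1$ vertices, taking the intersection of these strips as a convex safe region, and requiring the robber to remain inside it; for $n$ large this region is nonempty, and within it every line through the robber is long enough to guarantee an escape that stays in the region. A smaller slip: ``line coincidence is ruled out by the invariant'' is wrong as stated, since the cops have just moved and up to $k-1$ of them may now sit on the robber's lines. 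The salvage is pigeonhole---with $k$ robber lines and only $k-1$ cops, at least one line carries no cop---and on that single free line the intersection count goes through, with the sharper constant $(k-1)(k-2)+1$ in place of your $O(k^3)$.

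On the upper bound you are actually more scrupulous than the paper, which simply asserts that $k$ cops win by occupying each of the robber's $k$ lines and says nothing further. Your worry about non-unimodular pairs $(d_i,d_j)$ is real for reaching the shadow in a single step, but note that once cop $i$ is on the robber's $d_i$-line it can remain there by copying the robber's move verbatim, and within a single connected component the residue obstruction to ever reaching that line disappears. The paper treats this direction as routine; whatever care it requires, it is not the main obstacle---the lower bound is.
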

    \medskip

    Finally, we consider \emph{animal graphs}, which are defined similarly but two vertices $v, w$ share an edge if $v, w$ are the closest lattice points in some direction $d \in D$. For example, kings are $4$-animal, with the same direction set as $\mathcal{Q}_n$, and knights are also $4$-animal. We note that Theorem \ref{thm: cop number bound for k-royal graphs} is not true for animal graphs as $c(\mathcal{K}_n)=1$ and $c(\mathcal{N}_n)$ is eventually $3$. Aside from knight graphs, we give no results on animal graphs, but this seems to be a fruitful future direction as royal and animal graphs can model an object's movement given mechanical and environmental restrictions.
	\section{Knight graphs}\label{sec:knight_graphs}
	Here, we give~$c(\mathcal{N}_n)$ for all~$n \in \mathbb{N}$. The cases for $k=1,2$ are trivial, as these graphs have no edges. Additionally, we have $c(\mathcal{N}_3) = 3$ since $\mathcal{N}_3 \cong C_8 + K_1$. 

    \begin{lemma}\label{knight 4}
       $c(\mathcal{N}_4) = c(\mathcal{N}_5) = c(\mathcal{N}_6)= 2$.
    \end{lemma}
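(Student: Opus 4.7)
The plan is to prove the equality $c(\mathcal{N}_n) = 2$ for $n \in \{4,5,6\}$ by establishing the lower bound $c(\mathcal{N}_n) \geq 2$ and the matching upper bound $c(\mathcal{N}_n) \leq 2$ in each case separately.

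For the lower bound I would use the characterization of cop-win graphs as dismantlable graphs already cited in the introduction. Since every knight move flips the color of the occupied square, $\mathcal{N}_n$ is bipartite, and in particular triangle-free. If a vertex $v \neq u$ were dominated by $u$ in such a graph, then $u \sim v$ and every other neighbor $w$ of $v$ would also have to lie in $N(u)$, so that $u$, $v$, $w$ would span a triangle; hence the only dominated vertices of a triangle-free graph are leaves. Every square of an $n \times n$ board with $n \geq 4$ admits at least two knight moves, with the corners achieving this minimum, so $\delta(\mathcal{N}_n) \geq 2$. Therefore $\mathcal{N}_n$ has no dominated vertex, is not dismantlable, and $c(\mathcal{N}_n) \geq 2$.

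For the upper bound I would exhibit an explicit two-cop strategy in each of the three cases. The template I have in mind is a guard-and-chase scheme: first, place the two cops on carefully chosen central squares, one on each color class, so that their joint closed neighborhoods already dominate most of the board; then have one cop act as a guard, controlling a small set of bottleneck squares through which the robber's escape routes must pass, while the second cop actively closes in. A helpful parity observation is that, since $\mathcal{N}_n$ is bipartite, the cops can arrange after each round that the pursuing cop shares a color with the robber, so the robber cannot move without entering a square whose neighborhood is constrained by the cop positions.

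The step I expect to be the main obstacle is verifying termination of the chase for $n = 5$ and $n = 6$, where the robber has enough freedom that no naive Euclidean or graph-distance potential is monotone under knight moves. I plan to handle this by partitioning the board into a handful of zones adapted to the knight-graph geometry and showing, zone by zone, that the guard cop can force the robber either into a neighboring zone nearer to a corner or directly into capture. Because the graphs in question have at most $36$ vertices, the resulting case analysis is finite and can be cross-checked by direct computer search; for $\mathcal{N}_4$, with only $16$ vertices, the strategy is essentially transparent and serves as a warm-up for the larger cases.
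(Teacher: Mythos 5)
Your lower bound is correct and takes a genuinely different route from the paper. The paper gives an explicit robber strategy against one cop: the robber simply stays on a $4$-cycle, which works on every $\mathcal{N}_n$ with $n \geq 4$ (bipartiteness guarantees the cop can never threaten two adjacent vertices of the cycle at once). You instead argue structurally: $\mathcal{N}_n$ is bipartite, hence triangle-free; a dominated vertex of a triangle-free graph must be a leaf; and every square of an $n \times n$ board with $n \geq 4$ has at least two knight moves, so there is no dominated vertex, $\mathcal{N}_n$ is not dismantlable, and $c(\mathcal{N}_n) \geq 2$. Both arguments are sound; yours is checkable square-by-square with no pursuit analysis at all, while the paper's robber strategy is the one that extends verbatim to all larger boards, which is used again in the $3$-cop analysis for $n \geq 7$.

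For the upper bound, what you offer is a plan rather than a proof, and the hand-designed part is the part most likely to fail: the claim that the cops can ``arrange after each round that the pursuing cop shares a color with the robber'' is not something the cops can force unilaterally, since the robber may pass and change the relative parity at will, and you never exhibit a monotone quantity for $n = 5, 6$ --- the very obstacle you flag. What actually carries the result, both in your proposal and in the paper, is the finite verification: the paper starts the cops at $(3,3)$ and $(4,4)$ and checks by brute force that they win from every robber starting position, and your fallback of an exhaustive search over these graphs of at most $36$ vertices (the game is decidable by the standard finite game-state recursion) is the same move. So your argument is complete only to the extent that this computation is actually carried out; the zone/parity chase as described should either be dropped in favor of the computation or worked out in full, since as stated it does not yet establish $c(\mathcal{N}_n) \leq 2$.
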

    \begin{proof}
        The winning strategy for the robber with one cop is to stay on a~$4$-cycle, which is a valid winning strategy forone1 cop on all knight graphs~$\mathcal{N}_n$ where~$n \geq 4$.
         For a $2$-cop win, for all cases, let the cops start at~$(3,3)$ and $(4,4)$. By brute force analysis, the cops win regardless of the robber's starting position.
    \end{proof}

    We now show that the cop number is three for $n\geq 7$. First, we give a $4$-cop win strategy, which gives a simpler version of the $3$-cop win strategy. Denote $N(v)$ as the \emph{neighborhood of $v$}, which is the set consisting of $v$ and its neighbors.
    \begin{lemma}
        $c(\mathcal{N}_n) \leq 4$ for all~$n \geq 7$.
    \end{lemma}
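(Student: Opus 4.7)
The plan is to show that four cops suffice to confine and capture the robber by maintaining two perpendicular ``barriers.'' The key observation is that a knight alters each coordinate by at most $2$ per move; hence a knight whose row lies in $\{1,\dots,r-1\}$ cannot reach row $r+2$ or beyond without first landing in the two-row band $\{r,r+1\}$. If every square of this band that the robber could reach in one move lies in the closed neighborhood $N(c)$ of some cop $c$, then the robber cannot safely step onto the band and is confined to rows $\le r-1$.

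I would partition the four cops into two pairs. Pair $A$ maintains a local horizontal barrier: two cops positioned in (or very near) the band $\{r,r+1\}$ so that the union of their closed neighborhoods covers every square $(x,y)$ with $y\in\{r,r+1\}$ and $x$ within two of the robber's current column $c_r$. Since each cop's closed knight-neighborhood already has nine squares, two cops are comfortably enough to cover such a $2\times 5$ local window. Because both the robber and the cops shift by at most $2$ per axis per turn, pair $A$ can slide sideways in lockstep with the robber and keep the window aligned. Pair $B$ does the analogous job for a vertical barrier on a two-column band.

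Together the four cops trap the robber in a rectangle, and each turn I would slide each barrier one step toward the robber so that the rectangle shrinks monotonically. Once the enclosing rectangle has side length at most $6$, \cref{knight 4} lets two cops from one pair finish the capture while the other pair continues to hold its barrier. The induction is then complete.

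The main obstacle is giving explicit cop positions and verifying the barrier invariant against every possible robber response. This amounts to pinning down a parity-and-offset formula for the two cops in each pair, checking that each pair's local window stays covered as the robber jumps laterally, and confirming that the two pairs do not interfere with one another. A secondary complication is the edge cases where a barrier approaches the boundary of the board: the knight's neighborhoods are truncated there, but so are the robber's escape options, so these cases should be handled by a short separate argument rather than by the generic sliding strategy.
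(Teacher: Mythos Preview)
Your barrier plan is quite different from the paper's argument. There the four cops stay in a rigid $2\times 2$ square and move in unison: every cop makes the same knight move each turn, so the formation itself travels like a single knight. The union of their closed neighbourhoods guards a $4\times 4$ block, and by mimicking the robber's last move the formation never loses ground; once the board edge stops the robber from fleeing in some axis, the distance strictly decreases and capture follows.

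Your sketch has a genuine gap precisely where you assert that the rectangle shrinks monotonically. Knight moves are not coordinate-separable: a cop cannot shift its column by $\pm 2$ while holding its row fixed, so a cop on the band $\{r,r+1\}$ cannot simply mirror the robber's column. With only two cops per barrier this bites. Suppose the robber oscillates between $(c,r-1)$ and $(c+2,r-2)$. On each cop turn pair $A$ must reposition just to keep the changing set of band squares the robber can reach covered, and working through the cases one finds no spare move that lets the band slide from $\{r,r+1\}$ down to $\{r-1,r\}$ while maintaining coverage. You flag ``explicit cop positions and verifying the barrier invariant'' as a detail to be filled in, but that is exactly where the real difficulty of the knight graph lives; nothing in the outline rules out indefinite lateral stalling by the robber. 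The paper's formation trick sidesteps this because all four cops move as one piece and hence can mimic any robber move exactly.

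The endgame also does not close as written. If you release pair $A$ to run the two-cop capture from \cref{knight 4}, you have dismantled the horizontal barrier, and the robber can slip through rows $\{r,r+1\}$ before the capture finishes. \cref{knight 4} concerns $\mathcal{N}_6$, a board with hard walls on all four sides, not a $6\times 6$ window with one side held only by cops that have just left their post.
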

    \begin{proof}
        Consider a starting position for the cops at the four most central vertices. If~$n$ is odd, then one cop is at the central vertex and the other three are arranged so that the cops occupy a~$2 \times 2$ section of the board. Throughout the game, the four cops remain in this square formation. The union of the neighborhoods~$\bigcup_v N(v)$ for each of the cop vertices~$v$ forms a dense~$16$-vertex shape around the four vertices. 
        
        Regardless of where the robber starts, there exists a path that the formation can take (each cop moves in the same direction, maintaining the formation) to minimize the distance between the robber and the cops. If the robber moves away from the cop formation, the cops may \emph{mimic}, or move the same direction as the robber's move, so the distance will never increase. Notice that this cop formation guards a~$4\times 4$ subset of the board. Following this strategy ensures that the~$4 \times 4$ subset will eventually encapsulate the robber since if the robber were to continue moving outside of this region, it will eventually reach a boundary of the~$n \times n$ board such that it can no longer move away from the cops in at least one of the axes. 
        
        Once the robber is near the boundary, at worst, every other turn, the distance between the cops and robber is strictly decreasing. Thus, the cops win. 
    \end{proof}
    \smallskip
    
    \begin{proof}[Proof of Theorem \ref{thm:cop knights}]
        The winning strategy for the robber with two cops is to remain on an induced subgraph of $16$ vertices where each vertex has degree four (see~\cref{fig:knightpolytope}). Two cops can at most block all four neighbors of the robber, leaving the robber unthreatened, or one may threaten the robber on an adjacent vertex and the other may block two neighbors, leaving the robber with an escape route to its last neighboring vertex. 
    
        We show that three cops win via an algorithm. The cops create a formation, similar to the one from the proof of $c(\mathcal{N}_n) \leq 4$, that forces the robber onto the edge of the board and a losing position. The starting formation for the cops is a diagonal of length three. For example, they may start at~$(x - 1, y - 1), (x, y), (x + 1, y + 1)$ where~$(x, y)$ is a central vertex. Each starting vertex is of the same color, and the union of their neighborhoods covers a large, dense shape of only opposite-colored vertices. Notice that this cop formation can move in more directions than a single knight can. It can perform normal knight-like moves in the direction set $\{(\pm 1, \pm 2), (\pm2, \pm 1)\}$, and the additional moves in the direction set $\{(\pm 0, \pm 1), (\pm 1, \pm 0)\}$.
    
        The cops may chase the robber in the $3$-diagonal formation in a similar fashion to the $4$-cop win strategy such that the robber is eventually close enough for the cops to capture. In other words, we can assume that if the cops have not won, the robber is within the convex hull $S$ of the pink-colored squares of \cref{fig:knightdiag} or at the corner of the board, where the cops easily win.
    
        After the cops force the robber into $S$, casework shows that the cops can capture the robber, with some cases involving breaking the cop formation (see \cref{fig:3 cop ideal state}) to adhere to the boundaries of the board.
    \end{proof}
    \smallskip

    \begin{figure}[htbp!]
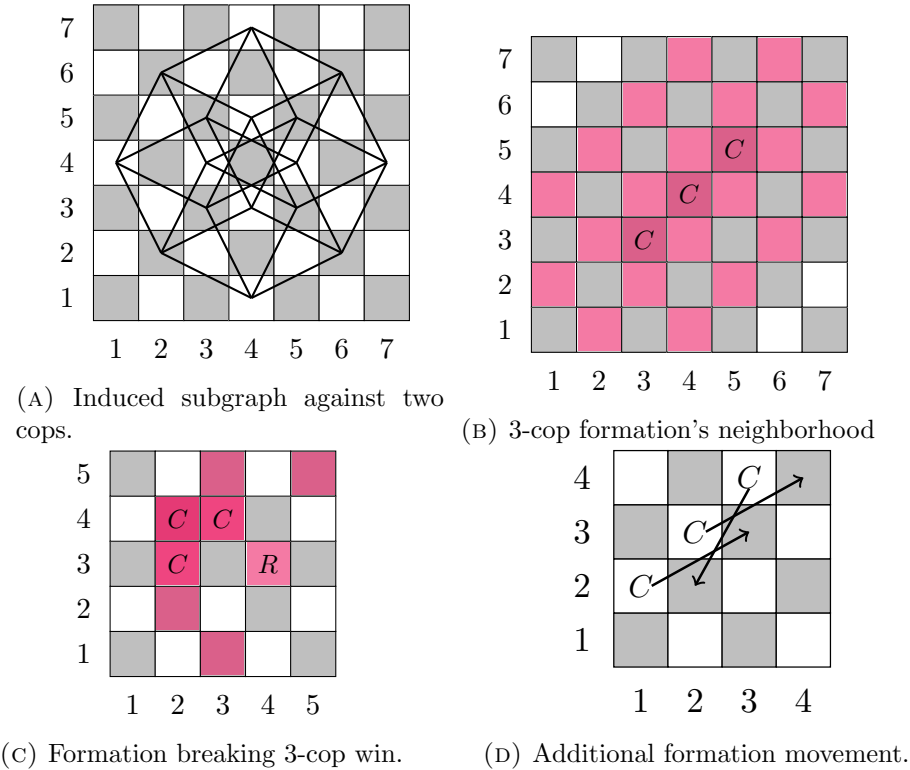

        \centering
        \begin{subfigure}[b]{0.45\textwidth}
        \centering
        \scalebox{1}{
        \drawchessboard{
          \draw[black, thick] (2,2) -- (4,1);
          \draw[black, thick] (6,2) -- (4,1);
          \draw[black, thick] (6,2) -- (7,4);
          \draw[black, thick] (6,6) -- (7,4);
          \draw[black, thick] (6,6) -- (4,7);
          \draw[black, thick] (2,6) -- (4,7);
          \draw[black, thick] (2,6) -- (1,4);
          \draw[black, thick] (2,2) -- (1,4);
          \draw[black, thick] (3,3) -- (4,5);
          \draw[black, thick] (4,5) -- (5,3);
          \draw[black, thick] (5,3) -- (3,4);
          \draw[black, thick] (3,4) -- (5,5);
          \draw[black, thick] (5,5) -- (4,3);
          \draw[black, thick] (4,3) -- (3,5);
          \draw[black, thick] (3,5) -- (5,4);
          \draw[black, thick] (5,4) -- (3,3);
          \draw[black, thick] (2,2) -- (3,4);
          \draw[black, thick] (2,2) -- (4,3);
          \draw[black, thick] (4,1) -- (3,3);
          \draw[black, thick] (4,1) -- (5,3);
          \draw[black, thick] (6,2) -- (4,3);
          \draw[black, thick] (6,2) -- (5,4);
          \draw[black, thick] (7,4) -- (5,3);
          \draw[black, thick] (7,4) -- (5,5);
          \draw[black, thick] (6,6) -- (4,5);
          \draw[black, thick] (6,6) -- (5,4);
          \draw[black, thick] (4,7) -- (3,5);
          \draw[black, thick] (4,7) -- (5,5);
          \draw[black, thick] (2,6) -- (3,4);
          \draw[black, thick] (2,6) -- (4,5);
          \draw[black, thick] (1,4) -- (3,5);
          \draw[black, thick] (1,4) -- (3,3);
        }{7}{true}
        }
        
        \caption{Induced subgraph against two cops.}\label{fig:knightpolytope}
        \end{subfigure}
        \begin{subfigure}[b]{0.45\textwidth}
        \centering
        \scalebox{1}{
        \drawchessboard{
        \foreach \dx/\dy in {2/1, 2/-1, -2/1, -2/-1, 1/2, 1/-2, -1/2, -1/-2, 0/0} {
            \pgfmathtruncatemacro{\cx}{5 + \dx}
            \pgfmathtruncatemacro{\cy}{5 + \dy}
            \ifthenelse{\cx > 0 \AND \cx < 8 \AND \cy > 0 \AND \cy < 8}{
                \node[fill=WildStrawberry, opacity=0.6, minimum size=.57cm, inner sep=0] at (\cx, \cy) {};
            }{};
        }
        \foreach \dx/\dy in {2/1, 2/-1, -2/1, -2/-1, 1/2, 1/-2, -1/2, -1/-2, 0/0} {
            \pgfmathtruncatemacro{\cx}{3 + \dx}
            \pgfmathtruncatemacro{\cy}{3 + \dy}
            \ifthenelse{\cx > 0 \AND \cx < 8 \AND \cy > 0 \AND \cy < 8}{
                \node[fill=WildStrawberry, opacity=0.6, minimum size=.57cm, inner sep=0] at (\cx, \cy) {};
            }{};
        }
        \node[fill=WildStrawberry, opacity=0.6, minimum size=.57cm, inner sep=0] at (4,4) {};
        \node[fill=WildStrawberry, opacity=0.6, minimum size=.57cm, inner sep=0] at (2,3) {};    \node[fill=WildStrawberry, opacity=0.6, minimum size=.57cm, inner sep=0] at (3,2) {};    \node[fill=WildStrawberry, opacity=0.6, minimum size=.57cm, inner sep=0] at (6,5) {};    \node[fill=WildStrawberry, opacity=0.6, minimum size=.57cm, inner sep=0] at (5,6) {};
        \node[chesspiece] at (5,5) {\small{$C$}};
        \node[chesspiece] at (3,3) {\small{$C$}};
        \node[chesspiece] at (4,4) {\small{$C$}};
    }{7}{true}
        }
        \caption{$3$-cop formation's neighborhood}\label{fig:knightdiag}
        \end{subfigure}

        \centering
        \begin{subfigure}[b]{0.48\textwidth}
        \centering
        
        \drawchessboard{
        \foreach \dx/\dy in {1/1, 1/2, 1/3, 2/0, 2/3, 2/4, 4/4} {
            \pgfmathtruncatemacro{\cx}{1 + \dx}
            \pgfmathtruncatemacro{\cy}{1 + \dy}
            \ifthenelse{\cx > 0 \AND \cx < 8 \AND \cy > 0 \AND \cy < 8}{
                \node[fill=WildStrawberry, opacity=0.6, minimum size=.57cm, inner sep=0] at (\cx, \cy) {};
            }{};
        }
        \node[fill=WildStrawberry, opacity=0.6, minimum size=.57cm, inner sep=0] at (2,3) {};
        \node[fill=WildStrawberry, opacity=0.6, minimum size=.57cm, inner sep=0] at (2,4) {}; 
        \node[fill=WildStrawberry, opacity=0.6, minimum size=.57cm, inner sep=0] at (3,4) {};  
        \node[fill=WildStrawberry, opacity=0.6, minimum size=.57cm, inner sep=0] at (4,3) {};  
        \node[chesspiece] at (2,3) {\small{$C$}};
        \node[chesspiece] at (2,4) {\small{$C$}};
        \node[chesspiece] at (3,4) {\small{$C$}};
        \node[chesspiece] at (4,3) {\small{$R$}};
    }{5}{true}
        \caption{Formation breaking $3$-cop win.}\label{fig:3 cop ideal state}
        \end{subfigure}
        \hfill
        \begin{subfigure}[b]{0.48\textwidth}
        \centering
        \scalebox{1.2}{
          \drawchessboard{
        \node[chesspiece] at (1,2) {\small{$C$}};
        \node[chesspiece] at (2,3) {\small{$C$}};
        \node[chesspiece] at (3,4) {\small{$C$}};
        \draw[->, thick, black] (1.2,2) -- (3,3);
        \draw[->, thick, black] (2.2,3) -- (4,4);
        \draw[->, thick, black] (3,3.8) -- (2,2);
    }{4}{true}
    }
        \caption{Additional formation movement.}\label{fig:3 diag movement}
        \end{subfigure}
        
        \caption{Knight strategies}
        \label{fig:knightstuff2}
    \end{figure}
	\section{Queen and royal graphs}\label{sec:queen_and_royal_graphs}
	\subsection{Queen graphs}\label{sub:queen_graphs}
	We begin by defining greedy strategies for both the cops and the robber and show that these strategies are at Nash equilibrium and lead to a~$3$-cop win on~$\mathcal{Q}_n$ for~$7 \leq n \leq 18$. At any time (turn)~$t$, the robber has a shortest diagonal~$s_t$, the length of which we denote as~$\Phi(s_t)$. Let~$f_t(c, r) = \Phi(s_{t + 1})$ where~$s_{t + 1}$ is the short diagonal obtained after a pair of moves~$(c, r)$ by the cops and robber, respectively. Let our greedy strategies be defined as
	 \begin{align*}
	c^\star &= \text{arg} \min_{c}\max_{r}{f_t(c, r)}, \\
    r^\star &= \text{arg} \max_{r}\min\limits_{c}{f_{t+1}(c,r)}.
	 \end{align*}
	Let~$c_{i, t}$ be the position of the~$i$-th cop at time~$t$ and every possible cop position~$c_{i,t} \in C_{i,t}$ coincides exactly with the elements of~$N(c_{i, t - 1})$, and the robber's possible moves~$r$ at time~$t$ coincide with elements of~$R_t= N(r_{t-1}) \setminus \cup_{i = 1}^{k} \{c_{i, t}\}$. In words, the cops want to minimize the length of the robber's short diagonal, while the robber wants to maximize it. Both players calculate one move in advance.

	\begin{lemma}\label{monotonic}
		For all states at the beginning of timestep~$t$ (\textsl{i.e.}, beginning of the cops' move), there exists a sequence of moves for three cops on~$\mathcal{Q}_n$ such that 
		$\Phi(s_{t+1}) \leq \Phi(s_t)$,
		is guaranteed. In other words, under this cop strategy,~$\Phi$ is non-increasing.
	\end{lemma}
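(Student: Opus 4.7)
The plan is to establish monotonicity by exhibiting an explicit cop move at each turn that certifies $\min_c\max_r\Phi(s_{t+1}) \leq \Phi(s_t)$, thereby giving a concrete witness for the greedy choice $c^\star$. First I would unpack the potential: the four diagonals through $(x_t,y_t)$ have lengths $\min(x_t-1,y_t-1)$, $\min(n-x_t,y_t-1)$, $\min(x_t-1,n-y_t)$, and $\min(n-x_t,n-y_t)$, so
\begin{equation*}
    \Phi(s_t) \;=\; \min\bigl(x_t-1,\; y_t-1,\; n-x_t,\; n-y_t\bigr),
\end{equation*}
that is, the Chebyshev distance from the robber to the boundary of the board. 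Invoking the dihedral symmetry of $\mathcal{Q}_n$, I would without loss of generality take $\ell := \Phi(s_t) = y_t - 1$, so the shortest diagonal points toward the bottom edge.

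Next I would characterize the dangerous robber moves. One has $\Phi(s_{t+1}) > \ell$ precisely when the new robber position $(x',y')$ lies in the interior sub-board $[\ell+2, n-\ell-1]^2$. Since $y_t = \ell+1$, this forces a strictly upward move, so the only threatening queen-directions are the column, the up-left diagonal, and the up-right diagonal: three lines through the robber, each contributing a contiguous segment of dangerous target squares. Row moves keep $y = \ell + 1$, hence $\Phi \leq \ell$; downward moves decrease $\Phi$ outright. The count of three threatened segments versus three available cops is the essential reason the argument can go through with exactly three cops.

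The proof would then proceed inductively, maintaining as a structural invariant that at the start of every turn, each of the three cops shares a queen-line with one of the three threatening segments through the robber, so that queen mobility lets each cop occupy the segment's robber-optimal target in a single move. The greedy cop move $c^\star$ consists of each cop repositioning onto its assigned segment to block the robber's best dangerous target along that direction; with all three optima blocked, the robber's best remaining choice is a non-dangerous move (row, downward, or staying), yielding $\Phi(s_{t+1}) \leq \ell$. The inductive step then reassigns cops across the three segments after the robber's response, again using queen mobility to restore the invariant for the next turn.

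The main obstacle will be the inductive step: showing the invariant is robustly maintained against every robber response. Because a single robber move can flip the identity of the shortest diagonal — turning a previously ``upward'' direction into a ``side'' direction — the cops may need to completely reshuffle their assignments. A case analysis stratified by the dihedral orbit of the (cop-configuration, robber-position) pair should reduce the proof to a manageable number of representative scenarios, each resolved by an explicit monotonicity-preserving cop move. Boundary configurations — ties among shortest-diagonal lengths near corners, and the base case consisting of the cops' initial placement — will require separate verification to close the induction.
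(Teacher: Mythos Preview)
Your formula for $\Phi$ is not what the paper defines. In the paper, $s_t$ is the shorter of the \emph{two} diagonal lines through the robber --- a queen has exactly two, of lengths $n-|x-y|$ and $n-|x+y-n-1|$ --- and $\Phi(s_t)$ is the length of that line, treated as one of the robber's four axes of movement. Your expression $\min(x-1,\,y-1,\,n-x,\,n-y)$ is instead the minimum over four diagonal \emph{rays}, equivalently the Chebyshev distance to the boundary. These disagree: at $(1,\lfloor n/2\rfloor)$ the paper's $\Phi$ is about $n/2$, while yours is $0$.

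This misreading drives your choice of which three axes to block, and that choice fails for the actual potential. You leave the row open, arguing that horizontal moves fix $y$ and hence cap your $\Phi$ at $y-1$. But a row move can strictly increase the paper's $\Phi$: from $(1,4)$ to $(4,4)$ on an $8\times 8$ board, the shortest diagonal goes from length $4$ to length $7$. So the cop move you propose does not witness $\Phi(s_{t+1})\le\Phi(s_t)$ for the statement you are asked to prove.

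The paper's argument is also far shorter than the inductive invariant-maintenance scheme you outline. The queen robber has four axes; three cops occupy the robber's row, column, and \emph{longer} diagonal, leaving only $s_t$ itself unguarded. Any robber move along $s_t$ keeps the robber on that same diagonal line, so $s_t$ is still one of its two diagonals at time $t+1$, and $\Phi(s_{t+1})\le\Phi(s_t)$ follows immediately. No dihedral reduction, no case analysis on flips of the shortest direction, and no separate boundary verification are needed.
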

	\begin{proof}
		Observe that with three cops, three axes of movement can be directly guarded, and thus the cops can force the robber to a single axis of movement indefinitely. Choose to leave~$s_t$ available for robber movement. This shows that~$\Phi(s_t)$ can never be larger than~$\Phi(s_{t-1})$.
	\end{proof}
	 
	\begin{lemma} \label{saddle}
    	Given the greedy strategies~$c^\star, r^\star$, for all~$r$,
    	\begin{align*}
    	    f(c^\star, r) \leq f(c^\star, r^\star).
    	\end{align*}
    	In other words, the robber does not benefit from changing their strategy, given that the opposing greedy cops' strategy is in play.
	\end{lemma}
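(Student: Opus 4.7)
The plan is to show that $r^\star$ is a best response to $c^\star$ by identifying both moves with optimizers of a single scalar quantity: the length of the shortest unblocked diagonal at the robber's new position. By \cref{monotonic}, three cops can always guard three of the four queen-movement axes; the greedy cop move $c^\star$ will select the three longest axes to guard, leaving $s_t$ (the shortest diagonal) as the only direction along which the robber can move without being immediately captured. I first make this characterization of $c^\star$ precise by comparing the four possible choices of which axis to leave open and noting that any choice other than ``leave $s_t$'' allows the robber a move that strictly increases $\Phi(s_{t+1})$, so $c^\star$ must be the ``leave $s_t$'' choice (up to ties between diagonals of equal length).

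Given this description of $c^\star$, the value $f(c^\star, r)$ depends only on the robber's move along the unguarded axis $s_t$: it equals the length of the new shortest diagonal $s_{t+1}$ from the robber's resulting vertex. Therefore, $\arg\max_r f(c^\star, r)$ is precisely the robber move along $s_t$ whose resulting position has the largest shortest-diagonal length. The next step is to show that $r^\star$ admits the same characterization. Since $r^\star = \arg\max_r \min_c f_{t+1}(c, r)$, and the inner minimum is achieved by the cops again playing greedily (guarding the three longest axes at the next timestep, by the same argument as for $c^\star$), we have $\min_c f_{t+1}(c, r)$ equal to the shortest-diagonal length at the robber's position after the move $r$. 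Hence $r^\star$ is also the argmax over $r$ of this quantity, and the inequality $f(c^\star, r) \leq f(c^\star, r^\star)$ follows for every robber response $r$.

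The main obstacle is verifying the characterization of $c^\star$ in edge cases where several diagonals share the minimum length or where the robber is near a corner so that certain axes are severely truncated. In the tie-breaking situations, I expect to show that all of the argmins yield the same value of $f$, so the lemma holds regardless of which tie-breaking rule the cops use. For boundary positions, I will check by case analysis on the robber's distance to each edge that the greedy cop move still corresponds to ``block the three longest axes,'' using \cref{monotonic} to rule out cop deviations that would allow $\Phi$ to strictly increase. Once these two verifications are in place, the equality of the two maximization problems is immediate and gives the desired saddle-point inequality.
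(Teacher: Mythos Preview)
Your plan is sound and would go through, but it takes a considerably more explicit route than the paper. The paper's argument is essentially two lines: by \cref{monotonic}, any robber move $r$ at time $T$ caps all future values of $\Phi$ by $f_T(c^\star,r)$, so a one-step lookahead cannot reveal anything better than the immediate $\arg\max$; hence $r^\star$ coincides with $\arg\max_{r\in R_t} f_t(c^\star,r)$, and the displayed inequality is then just the definition of a maximum. Notably, the paper never identifies $c^\star$ with ``block the three longest axes,'' and it performs no boundary or tie-breaking casework --- monotonicity alone absorbs all of that.

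Your approach, by contrast, invests effort in pinning down what $c^\star$ and $r^\star$ actually look like on the board. This yields a more concrete picture, which is helpful for intuition and for the computational portion of the paper, but it also creates the obligations you flag in your last paragraph. One caution: your characterization of $c^\star$ as ``leave $s_t$ open'' is not in general the \emph{unique} minimizer (e.g.\ when the robber is positioned so that moving along a row or column cannot increase the shortest diagonal either), so the argument should be phrased in terms of the \emph{value} $\max_r f_t(c^\star,r)$ rather than the identity of $c^\star$. Once you make that adjustment, your step~3 reduces to precisely the paper's monotonicity observation, and the edge-case analysis you anticipate becomes unnecessary.
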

	\begin{proof} 
		By Lemma \ref{monotonic}, three cops can control~$\Phi(s_t)$. Thus, if the robber chooses a suboptimal move~$r$ at time $T$, then~$f_t(c^\star, r^\star)\leq f_T(c^\star, r)$ for all~$t>T$. In other words, the short diagonal length of the suboptimal state will bound all future short diagonal lengths from above. Thus, we can say that 
		\begin{align*}
			f_t(c^\star, r) \leq \max_{r \in R_t} f_t(c^\star,r) = f_t(c^\star, r^\star),
		\end{align*}
		by definition, without worry that a suboptimal robber move may lead to a better position at a future turn.
	\end{proof}
    \smallskip
	
	\begin{lemma}\label{alg}
	   Under the greedy strategies for both teams, we have that three cops can capture the robber in a finite number of turns for~$n\leq 18$.
	\end{lemma}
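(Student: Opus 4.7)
The approach is to combine the structural lemmas already established with a finite exhaustive verification. By \cref{saddle}, once the cops commit to $c^\star$ the robber's best response is $r^\star$, so it suffices to analyze the deterministic dynamical system in which both sides play their greedy strategy. By \cref{monotonic}, the potential $\Phi(s_t)$ is non-increasing along this trajectory, and since $\Phi$ takes values in the finite set $\{0,1,\ldots,n-1\}$, it must stabilize at some value $\Phi^\star$ after finitely many turns. If $\Phi^\star = 0$ the robber has been confined to the cops' occupied squares and capture is immediate, so the task reduces to ruling out stalling at $\Phi^\star > 0$.

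The plan for ruling out stalling is to augment the greedy strategy with a fixed tie-breaker and a secondary potential — a natural candidate is the Chebyshev distance from the cop configuration to the robber, lexicographically combined with the robber's distance to the nearest corner of the board — and to argue that once $\Phi$ has stabilized, each pair of moves strictly decreases this secondary potential. Because the board is finite, strict decrease forces the robber into a corner region in which the three cops, by the direction-guarding argument in the proof of \cref{monotonic}, capture in bounded time. For each $n \in \{7,\ldots,18\}$, the state space of cop and robber positions has size at most $n^8$, and after quotienting by the dihedral symmetry group $D_4$ of the board this is small enough to enumerate directly; I would simulate greedy play from every starting configuration and confirm termination in capture by a finite computer-assisted search (finite state space plus deterministic dynamics means any non-capturing run is detectable as a cycle in one pass).

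The main obstacle is precisely the stalling issue, since \cref{monotonic} gives only weak monotonicity of $\Phi$ and one-step greedy lookahead is not automatically progressive. Designing a tie-breaker under which the secondary potential strictly decreases on every non-capturing turn is the delicate part, and I expect the subtlety to concentrate at the boundary of the board, where the greedy evaluation can undercount the robber's true options because diagonals are truncated by the edge. Accordingly, I anticipate the bulk of the work is a careful but finite case analysis of boundary configurations, combined with a computer-assisted check that the resulting greedy-with-tie-breaker strategy captures the robber uniformly across all twelve values of $n$ in the range; the upper bound $n \leq 18$ is presumably the largest value for which this exhaustive verification still succeeds, matching Sullivan \textsl{et al.}'s threshold $c(\mathcal{Q}_{19}) = 4$.
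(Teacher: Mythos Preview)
Your overall plan---augment the greedy strategy with a secondary tie-breaker and then verify termination by exhaustive computer search over the finite state space---is exactly the paper's approach. The paper, however, uses a simpler and more direct secondary potential: rather than Chebyshev distance to the cops combined with distance to a corner, it tiebreaks on $|R_t|$, the number of squares still available to the robber after the cops' move. This is a more natural choice because it measures exactly what the cops are trying to drive to zero, and capture occurs precisely when $|R_t|=0$; no separate ``corner region'' endgame analysis is needed. The paper then simply runs the resulting deterministic algorithm (in \texttt{SageMath}), observes that no state repeats, and reports the turn counts (under $30$ for $n\le 17$, at most $151$ for $n=18$).

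Two small corrections to your write-up. First, $\Phi(s_t)$ is the length of the robber's shortest diagonal, which is always at least $1$ (a corner square), so $\Phi^\star=0$ never occurs and does not correspond to capture; this is precisely why a secondary potential is required even after $\Phi$ stabilizes. Second, your proposed theoretical argument that the secondary potential strictly decreases on every non-capturing turn is more than the paper attempts or needs: once the dynamics are deterministic on a finite state space, it suffices to check computationally that no cycle occurs, and the paper does exactly that without trying to prove strict monotonicity by hand. Your anticipated ``careful but finite case analysis of boundary configurations'' is therefore unnecessary; the computation absorbs it.
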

	\begin{proof}
		We show this by direct computation. We first rank moves by short diagonal length, then tiebreak with the number of available squares left for the robber. More formally, we tiebreak moves as follows:
		\begin{align*}
			c^\star = \text{arg} \min_{c}\max_{r}{|R_t|},\qquad r^\star= \text{arg} \max_{r}\min_{c}{|R_{t+1}|}.
		\end{align*}
		This tiebreaker is precisely how the cops eventually capture the robber after the robber is pushed to a sufficiently small~$s_t$.
		
		For~$n \leq 17$, three cops capture the robber in under~$30$ turns. For~$\mathcal{Q}_{18}$, the cops win in at most~$151$ turns\footnote{These algorithms are implemented using \texttt{SageMath}. This code is open source and can be found at \href{https://github.com/danielma4/Cops_and_Robbers}{\url{github.com/danielma4/Cops_and_Robbers}}.}.
		
		Note that this algorithm does not rely on randomness in ranking cop or robber moves. No states are repeated, so cops employ brute force in trying all moves that minimize~$\Phi$ and~$|R_t|$ until one eventually pushes the robber to a suboptimal~$s_t$.
	\end{proof}
    \smallskip
    
	We've shown that three cops are sufficient to capture the robber on $\mathcal{Q}_n$ for $n \leq 18$. It remains to show that the robbers win against two cops.
	\begin{lemma}[\cite{sullivan2017introduction}]\label{lem:guarding}
	   On $\mathcal{Q}_n$, a cop can guard at most three vertices on each of the robber’s lines. If the cop also threatens, then it can guard at most two vertices on another line.
	\end{lemma}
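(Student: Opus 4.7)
The plan is to characterize, for any cop position $c$ and robber position $r$, which vertices on each of the four queen-lines through $r$ lie in $N[c]$, since a cop guards exactly these vertices. The key preliminary observation is that a vertex $v \neq r$ on a robber line $L$ is in $N[c]$ if and only if $v$ lies on one of the four queen-lines through $c$ (horizontal, vertical, or one of the two diagonals). This reduces counting guarded vertices on $L$ to counting lattice intersections of $L$ with those four lines through $c$.

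For the first assertion, I would fix a robber line $L$ with $c \notin L$. Exactly one of the four queen-lines through $c$ has the same direction as $L$; since $c \notin L$, this line is strictly parallel to $L$ and contributes no intersections. Each of the remaining three lines through $c$ has a different direction from $L$ and hence meets $L$ in at most one lattice point, yielding at most three guarded vertices on $L$.

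For the second assertion, I would suppose the cop threatens, so $c$ lies on some robber line $L_i$, and fix another robber line $L_j$ through $r$. The queen-line through $c$ in direction $L_i$ coincides with $L_i$ itself, and $L_i \cap L_j = \{r\}$, contributing only $r$. The queen-line through $c$ in direction $L_j$ is parallel to $L_j$ (since $c \neq r$, and $L_i \cap L_j = \{r\}$ forces $c \notin L_j$) and contributes nothing. Each of the remaining two lines through $c$ meets $L_j$ in at most one point. Excluding $r$ itself from the count leaves at most two guarded vertices on $L_j$.

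The main subtlety is pinning down the convention that the robber's own vertex $r$ is excluded when counting guarded vertices on a robber line; without this, the bound of three fails on any line containing the cop, where every vertex of that line is queen-adjacent to $c$. Once this convention is in place, the argument reduces to elementary counting of intersections between lines in four fixed directions, and no structural input from $\mathcal{Q}_n$ beyond its queen-adjacency definition is required.
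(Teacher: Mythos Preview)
The paper does not supply its own proof of this lemma; it is quoted from Sullivan \textsl{et al.} and used as a black box in the proofs of Theorem~\ref{thm: 3-cop win algo} and Lemma~\ref{cop number bound for queens}. Your argument is correct and is the natural line-intersection count: of the four queen-lines through $c$, one is parallel to a given robber line $L$ and the remaining three meet $L$ in at most one lattice point each; when $c$ already sits on some robber line $L_i$, a second of the four lines through $c$ (namely $L_i$ itself) meets any other $L_j$ only at $r$, leaving at most two further intersections.

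One small correction to your closing paragraph: excluding $r$ from the count is the right convention for the threatening case, but it does not by itself rescue the bound of three on a robber line that actually contains $c$, since on such a line the cop guards \emph{every} vertex, not just $r$. The correct reading of the first sentence of the lemma is that it concerns only robber lines $L$ with $c \notin L$ (equivalently, the cop is not threatening along $L$); the second sentence then handles the remaining lines when $c$ does threaten. This is exactly the hypothesis you impose in your proof of the first assertion, so the argument itself is fine---only the diagnosis of the subtlety needs adjusting.
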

	\begin{proof}[Proof of Theorem~\ref{thm: 3-cop win algo}]
		We begin by showing that two cops cannot win on~$\mathcal{Q}_n$ for~$7\leq n \leq 18$. To win against the robber, we have two cops directly threaten on two unique directions of the robber's movement and indirectly guard all vertices on the last two lines. This ensures maximal coverage by the cops by Lemma~\ref{lem:guarding}. Note that out of the four directions of movement, the horizontal and vertical axes will always be of length~$n$. Thus, the cops will choose to occupy the robber's horizontal and vertical axes of movement, leaving the two diagonals available, as these diagonals can decrease in length as the robber moves away from the center of the graph. Call these diagonals~$L_1$ and $L_2$. The cops need to guard all vertices on both of these axes, so we concern ourselves with the longest of these diagonals. Thus, we examine the case for which indirectly guarding is easiest for the cops, \textsl{i.e.}, guarding the diagonal line~$L$ with length~$\min\max\{(\mathrm{length}(L_1), \mathrm{length}(L_2))\}$. 
	
		Note that the length of $L$ is minimized at the middle of an edge of the board, when the robber has equal length diagonals, or diagonals which differ in length by one. The length of~$L$ is equal to~$n - \lfloor \frac{n - 1}{2} \rfloor$.
        By Lemma \ref{lem:guarding}, two cops can only cover five vertices on this diagonal. Thus, when~$n - \lfloor \frac{n - 1}{2} \rfloor > 5$, two cops cannot possibly cover this entire axis. It can be verified that~$n = 10$ is the first value for which this inequality is satisfied. Thus, two cops cannot win on any~$\mathcal{Q}_n$ for which~$n \geq 10$.
	
		For~$\mathcal{Q}_n$ with $n = 7, 8, 9$, we consider all possible cases, of which there are $3 \cdot \binom{n^2}{3}$. We use brute force computation to conclude that for~$n = 7, 8, 9$ there does not exist a~$2$-cop win state. Lastly for~$7\leq n \leq 18$, we apply lemmas \ref{saddle} and \ref{alg} to show the claim.
	\end{proof}
	\subsection{Royal Graphs}\label{sub:royal_graphs}
	Here, we show Theorem \ref{thm: cop number bound for k-royal graphs}. First, we give a weaker version of a result of \cite{sullivan2017introduction}, which says $c(\mathcal{Q}_n) =4$ for $n \geq 19$.
\begin{lemma}\label{cop number bound for queens}
	For $n \geq 22$, we have $c(\mathcal{Q}_n) = 4$.
\end{lemma}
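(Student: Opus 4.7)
The bound $c(\mathcal{Q}_n) \leq 4$ was noted in \cref{sub:discussion_and_results} (four cops guarding the four queen directions), so it suffices to prove $c(\mathcal{Q}_n) \geq 4$ for $n \geq 22$, i.e.\ that three cops cannot win. My plan is to exhibit a perpetual evasion strategy for the robber against any three cops, with \cref{lem:guarding} as the main quantitative tool.

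The starting observation is a collective version of \cref{lem:guarding}: if $t \in \{0, 1, 2, 3\}$ of the three cops sit on queen-lines through the robber, then any queen-line $L$ through the robber which no cop occupies is guarded in at most $2t + 3(3-t) = 9 - t$ vertices. Hence an unthreatened line $L$ contains an unguarded neighbor of the robber whenever $|L| \geq 11 - t$, and it contains such a neighbor inside a prescribed central region $D$ whenever $|L \cap D| \geq 11 - t$.

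The robber's strategy will be to remain inside a central region $D$ designed so that every $R \in D$ has enough of its four queen-lines satisfying $|L \cap D| \geq 11 - t$ for the relevant $t$. A natural first choice is the diamond $D = \{(x,y) \colon |x - y| \leq n - 11 \text{ and } |x + y - n - 1| \leq n - 11\}$, in which all four queen-lines through any $R \in D$ have length at least $11$ on the board. At each turn I would case-split on $t$: for $t \leq 2$ the argument is straightforward because at least two unthreatened lines remain and each has generous $|L \cap D|$; the critical case is $t = 3$, where the cops occupy the three "long" queen-lines through the robber and try to push it out along the fourth.

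The hard step is exactly this $t = 3$ case with the robber near the boundary of $D$: the single unthreatened line $L$ may have $|L \cap D|$ small enough that the three threatening cops (each contributing up to two guards on $L$) cover $L \cap D$ entirely and force the robber out. For $n \geq 22$ the geometric slack allows one to choose (possibly a slightly buffered) central region so that $|L \cap D| \geq 8$ in every such configuration, which closes by a bounded check over the worst-case robber positions on the boundary of $D$ (robber at an extreme point of the diamond, cops occupying the three lines of largest $|L \cap D|$). Once that inequality is verified, the cases $t \leq 2$ follow a fortiori, yielding the lower bound; this is a slightly weaker threshold than Sullivan et al.'s $n \geq 19$ precisely because we only use the coarse line-coverage bound from \cref{lem:guarding} rather than finer position arguments.
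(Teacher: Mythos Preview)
Your overall plan---keep the robber inside a central safe region $D$ and use \cref{lem:guarding} to bound how many vertices on an unthreatened line the cops can cover---is exactly the paper's approach, and you even land on the correct target inequality $|L\cap D|\ge 8$ for the $t=3$ case. The gap is the shape of $D$. Your diamond only forces the two \emph{diagonals} through the robber to be long on the board; it does nothing to control how much of a \emph{row} or \emph{column} lies inside $D$. At (or near) a vertex of the diamond the horizontal and vertical lines through the robber meet $D$ in just one or two lattice points, so if the three cops sit on the column and the two diagonals, the one unthreatened line---the row---has $|L\cap D|\le 2$ and the robber is forced out. ``Buffering'' the diamond does not help: a shrunken diamond has exactly the same corner geometry, so the bounded check you propose at the extreme points of $D$ would simply fail there.

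The missing idea, and the content of the paper's proof, is to take $D$ to be an \emph{octagon}: the intersection of a diamond (making both diagonals long inside $D$) with an axis-aligned central square (making both the row and the column long inside $D$). If each of the eight sides contains eight lattice points, then every queen line through every point of $D$ satisfies $|L\cap D|\ge 8$, which is precisely your $t=3$ requirement; the smallest board on which such an octagon fits is $\mathcal{Q}_{22}$, and that is where the threshold comes from. With the octagon in hand the paper dispenses with the $t$-casework and argues directly from three threatening cops covering at most $6+1=7$ squares on the remaining line.
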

\begin{proof}
		We present a winning strategy for the robber against three cops on $\mathcal{Q}_n$ with $n \geq 22$. Consider an octagon chosen such that each side has eight vertices. By Lemma \ref{lem:guarding}, three cops can cover six vertices when each threatens on a unique axis. With the addition of the robber's current vertex, the cops can cover at most seven vertices on any axis of the robber's movement. Notice that if we restrict the robber's movement to this octagon-shaped induced subgraph, all four axes of queen movement are always at least of length eight. Thus, there will always be an axis of movement for the robber that has at least one unguarded vertex to move to. The smallest queen graph on which this octagon is a subgraph is $\mathcal{Q}_{22}$. As all $\mathcal{Q}_n$ are $4$-cop win, we conclude that $c(\mathcal{Q}_n) = 4$ for all $n \geq 22$.
	\end{proof}
    \smallskip
	
	We call this argument the \emph{octagon argument}, and generalize it to show \cref{thm: cop number bound for k-royal graphs}. Recall that a \emph{royal graph} is a graph $([n]^2, E_d)$ and a set $D$ of directions such that any two points $x, y \in [n]^2$ are adjacent if and only if the slope of the line connecting them is an element of $D$.
	\begin{proof}[Proof for Theorem \ref{thm: cop number bound for k-royal graphs}]
		Fix a set $D = \{d_1, \dots, d_k\}$ of $k$-many directions, and let $\{G_n\}_{n \in \mathbb{N}}$ be a $k$-royal family of graphs. If $c$ is a cop that is not attacking the robber $r$, then each line of $c$ intersects each line (except that which is parallel to a given line of $r$) of $r$ in at most one point. In this case, we note that $c, r$ have at most $(k - 1)$-many common neighbors through a given line of $r$. On the other hand, if $c$ is attacking $r$ on a line with slope $d_j$ for some $j \in [k]$, then both $c$ and $r$ have at most $(k - 2)$-many common neighbors through a line of slope $d_{\ell} \neq d_j$ of $r$ that do not intersect a line of slope $d_\ell$. This is an analogue of Lemma \ref{lem:guarding}
	
		Clearly, $k$-many cops win by attacking $r$ on each of its $k$-many lines. We now show that $(k-1)$-many cops do not win for large boards. For large $n$, the number of available squares on a given line grows linearly, yet the number that $(k-1)$-many cops can cover while attacking is bounded above by a constant, namely $(k - 1)(k - 2) + 1$. There is an analogous octagon argument for this family of graphs. Let $L_j$ be the collection of lines with slope $d_j$ with more than $(k - 1)(k - 2) + 1$ vertices, and let $N \in \mathbb{N}$ be large enough so that $|L_j| \geq 2$ for all $j \in [k]$. This defines a convex region of the board. Indeed, for $n$ large, the intersection of the regions defined by the $L_j$ has non-zero measure and gives a space in which the robber may move without being captured by $(k - 1)$-many cops.
	\end{proof}
    \section*{About the authors}
As stated previously, this research was part of the NSMRP REU.

\noindent Undergraduate mentees:
\begin{itemize}[leftmargin=10pt,itemindent=1em]
    \item Sally Ambrose, \url{ambrose.sa@northeastern.edu}
    \item Jacob Chen, \url{chen.jacob3@northeastern.edu}
    \item Daniel Ma, \url{ma.dan@northeastern.edu}
    \item Wraven Watanabe, \url{watanabe.so@northeastern.edu}
    \item Stephen Whitcomb, \url{whitcomb.s@northeastern.edu}
    \item Shanghao Wu, \url{wu.shangh@northeastern.edu}
\end{itemize}
PhD student mentors:
\begin{itemize}[leftmargin=10pt,itemindent=1em]
    \item Evan Angelone, \url{griggs.e@northeastern.edu}
    \item Arturo Ortiz San Miguel, \url{ortizsanmiguel.a@northeastern.edu}
\end{itemize}
	\medskip
	\printbibliography
\end{document}